\theoremstyle{plain}
\newtheorem{theorem}{Theorem}
\newtheorem{lemma}{Lemma}
\theoremstyle{definition}
\newtheorem{definition}{Definition}
\newcommand{\bell}{\textup{B}}
\subjclass[2010]{11A05, 11P99}
\keywords{Divisor sum; Triangular numbers; Partial Bell polynomials; Fa\`{a} di Bruno's formula; Jacobi's theta function} 
\author{Sumit Kumar Jha}
\address{International Institute of Information Technology\\
Hyderabad-500032, India}
\email{kumarjha.sumit@research.iiit.ac.in} 
\begin{document}
	
\title[Triangular number decomposition]{An identity involving number of representations of $n$ as a sum of $r$ triangular numbers}

\begin{abstract}
Let $\sum_{d|n}$ denote sum over divisors of a positive integer $n$, and $t_{r}(n)$ denote the number of representations of $n$ as a sum of $r$ triangular numbers. Then we prove that
$$
\sum_{d|n}\frac{1+2\,(-1)^{d}}{d}=\sum_{r=1}^{n}\frac{(-1)^{r}}{r}\, \binom{n}{r}\, t_{r}(n)
$$
using a result of Ono, Robbins and Wahl.
\end{abstract}

\maketitle

\section{Main result}
In the following, $\sum_{d|n}$ denotes the sum over divisors of a positive integer $n$.
\begin{definition}
Let $\Psi(q)$ be the following infinite series 
$$\Psi(q):=\sum_{n=0}^{\infty}q^{n(n+1)/2}=1+q+q^{3}+q^{6}+\cdots$$ where $|q|<1$.
\end{definition}
\begin{definition}
For any positive integer $n$, the numbers $n(n+1)/2$ are the \emph{triangular numbers}. Suppose $n$ and $r$ are positive integers. We let $t_{r}(n)$ denote the number of representations of $n$ as a sum of $r$ triangular numbers where representations with different orders are counted as unique. Let $t_{r}(0)=1$.
\end{definition}
For example, $t_{2}(7)=2$ since $7=1+6=6+1$.\par 
We note that
$$
\Psi^{r}(q)=\sum_{n=0}^{\infty}t_{r}(n)\, q^{n}.
$$
Our aim is to derive the following identity.
\begin{theorem}
\label{main}
For all positive integers $n$ we have
\begin{equation}
\label{maineq}
\sum_{d|n}\frac{1+2\,(-1)^{d}}{d}=\sum_{r=1}^{n}\frac{(-1)^{r}}{r}\, \binom{n}{r}\, t_{r}(n).
\end{equation}
\end{theorem}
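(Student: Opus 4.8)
The plan is to prove~\eqref{maineq} by showing that \emph{both} sides equal $-[q^{n}]\log\Psi(q)$, the negative of the coefficient of $q^{n}$ in $\log\Psi(q)$; the right-hand side is handled via Fa\`a di Bruno's formula and partial Bell polynomials (this is where the result of Ono, Robbins and Wahl enters), and the left-hand side via the classical infinite product for $\Psi$.

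\textbf{The right-hand side.} Since $\Psi(0)=1$, write $\log\Psi(q)=\sum_{k\ge 1}\ell_{k}q^{k}\in q\,\mathbb{Q}[[q]]$, so that $t_{r}(n)=[q^{n}]\Psi(q)^{r}=[q^{n}]\exp\!\bigl(r\log\Psi(q)\bigr)$. Expanding the exponential and using $\bigl(\log\Psi(q)\bigr)^{m}=O(q^{m})$ shows $r\mapsto t_{r}(n)$ is a polynomial in $r$ of degree at most $n$, with vanishing constant term (for $n\ge 1$) and with coefficient of $r$ equal to $\ell_{n}=[q^{n}]\log\Psi(q)$; Fa\`a di Bruno makes this explicit as $n!\,t_{r}(n)=\sum_{m=1}^{n}r^{m}\,B_{n,m}(1!\,\ell_{1},2!\,\ell_{2},\dots)$, which is the form supplied by Ono, Robbins and Wahl. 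Substituting this into the right-hand side of~\eqref{maineq} and extending the sum to all $r\ge 1$ (legitimate since $\binom{n}{r}=0$ for $r>n$), everything reduces to the elementary evaluation, valid for $1\le m\le n$,
$$\sum_{r\ge 1}\frac{(-1)^{r}}{r}\binom{n}{r}r^{m}=\sum_{r\ge 1}(-1)^{r}\binom{n}{r}r^{m-1}=\begin{cases}-1,&m=1,\\ 0,&2\le m\le n,\end{cases}$$
the vanishing for $m\ge 2$ holding because $\sum_{r\ge 0}(-1)^{r}\binom{n}{r}r^{m-1}=(-1)^{n}n!\,\stirT{m-1}{n}=0$ when $m-1<n$. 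Hence only the linear term of $t_{r}(n)$ contributes, and $\sum_{r=1}^{n}\frac{(-1)^{r}}{r}\binom{n}{r}t_{r}(n)=-\ell_{n}=-[q^{n}]\log\Psi(q)$. (One can bypass Bell polynomials entirely: expand $t_{r}(n)=\sum_{k\ge 0}\binom{r}{k}[q^{n}]\bigl(\Psi(q)-1\bigr)^{k}$ and use $\sum_{r\ge 1}\frac{(-1)^{r}}{r}\binom{n}{r}\binom{r}{k}=\frac{(-1)^{k}}{k}$ for $1\le k\le n$, which collapses the right-hand side to $[q^{n}]\sum_{k\ge 1}\frac{(-1)^{k}}{k}\bigl(\Psi(q)-1\bigr)^{k}=-[q^{n}]\log\Psi(q)$.)

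\textbf{The left-hand side.} Using the classical product $\Psi(q)=\prod_{m\ge 1}\frac{1-q^{2m}}{1-q^{2m-1}}$ for Ramanujan's theta function (a form of the Jacobi triple product), take logarithms, expand $-\log(1-q^{j})=\sum_{i\ge 1}q^{ij}/i$, and sum geometric series to get
$$\log\Psi(q)=\sum_{j\ge 1}\frac{1}{j}\Bigl(\frac{q^{j}}{1-q^{2j}}-\frac{q^{2j}}{1-q^{2j}}\Bigr)=\sum_{j\ge 1}\frac{1}{j}\cdot\frac{q^{j}}{1+q^{j}}=\sum_{j\ge 1}\frac{1}{j}\sum_{i\ge 1}(-1)^{i-1}q^{ij},$$
whence $[q^{n}]\log\Psi(q)=-\sum_{d\mid n}(-1)^{n/d}/d$ and so $-[q^{n}]\log\Psi(q)=\sum_{d\mid n}(-1)^{n/d}/d$. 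It then remains to verify the elementary arithmetic identity $\sum_{d\mid n}(-1)^{n/d}/d=\sum_{d\mid n}\bigl(1+2(-1)^{d}\bigr)/d$, which follows by writing $n=2^{a}m$ with $m$ odd, grouping divisors as $d=2^{b}e$ ($0\le b\le a$, $e\mid m$), and checking that $\sum_{d\mid n}\frac{1+2(-1)^{d}-(-1)^{n/d}}{d}$ telescopes to $0$ (it is $0$ termwise when $a=0$, and a short geometric sum shows it vanishes when $a\ge 1$). Combining the two halves proves~\eqref{maineq}.

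The only genuinely non-routine step is the reduction of $t_{r}(n)$ to its polynomial-in-$r$ form --- equivalently the Bell-polynomial formula for the coefficients of $\Psi(q)^{r}$ --- and the attendant interchange of the finite alternating sum over $r$ with the exponential expansion; this is precisely what the Ono--Robbins--Wahl result supplies. Once that is in hand the proof is just the two explicit computations above, the second of which needs only the product formula for $\Psi$ and a brief $2$-adic bookkeeping.
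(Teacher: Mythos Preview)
Your argument is correct, and it shares the paper's high-level strategy: both sides of~\eqref{maineq} are shown to equal $-[q^{n}]\log\Psi(q)$. The execution, however, differs on each side.

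For the left-hand side, the paper works directly from the Ono--Robbins--Wahl product $\Psi(q)=\prod_{j\ge 1}(1+q^{j})^{2}(1-q^{j})$, whose logarithm immediately yields $-\sum_{n\ge 1}q^{n}\sum_{d\mid n}\frac{1+2(-1)^{d}}{d}$; no further arithmetic is needed. You instead use the equivalent form $\Psi(q)=\prod_{m\ge 1}\frac{1-q^{2m}}{1-q^{2m-1}}$, obtain $\sum_{d\mid n}\frac{(-1)^{n/d}}{d}$, and then must verify a separate $2$-adic identity to reconcile the two divisor sums. The paper's route is shorter here.

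For the right-hand side, the paper applies Fa\`a di Bruno to $\log\Psi$, expresses $B_{n,k}\bigl(\Psi'(0),\dots\bigr)$ in terms of $t_{r}(n)$ via the generating function $(\Psi(q)-1)^{k}$, and closes with the telescoping identity $\sum_{k=r}^{n}\frac{1}{k}\binom{k}{r}=\frac{1}{r}\binom{n}{r}$. Your first route inverts this: you apply Fa\`a di Bruno to $\Psi^{r}=\exp(r\log\Psi)$, view $t_{r}(n)$ as a polynomial in $r$, and kill all but the linear term via the Stirling-number vanishing $\sum_{r}(-1)^{r}\binom{n}{r}r^{m-1}=0$ for $1\le m-1<n$. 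Your parenthetical alternative---expanding $\Psi^{r}=(1+(\Psi-1))^{r}$ binomially and using $\sum_{r}\frac{(-1)^{r}}{r}\binom{n}{r}\binom{r}{k}=\frac{(-1)^{k}}{k}$---is the slickest of the three, since it bypasses Bell polynomials altogether and lands directly on the Mercator series for $\log\Psi$.

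One correction: the Ono--Robbins--Wahl result cited in the paper is \emph{not} the polynomial-in-$r$ formula for $t_{r}(n)$; it is precisely the infinite product~\eqref{ono} for $\Psi(q)$. Your right-hand side computation is in fact entirely self-contained and does not invoke their result, while your left-hand side uses (a variant of) it via the Jacobi product. So the attribution in your final paragraph should be moved from the first half of the argument to the second.
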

We use the following result due to Ono, Robins and Wahl \cite[Proposition 1]{Wahl}:
\begin{equation}
    \label{ono}
    \Psi(q)=\prod_{j=1}^{\infty}\frac{(1-q^{2j})^2}{(1-q^{j})}=\prod_{j=1}^{\infty}(1+q^{j})^2\, (1-q^j)
\end{equation}
where $|q|<1$.\par 
We further require following two lemmas for our proof.
\begin{lemma}
For all positive integers $n$ we have
\begin{equation}
\label{lem1}
\sum_{d|n}\frac{1+2\,(-1)^{d}}{d}=\frac{1}{n!}\sum_{k=1}^n (-1)^{k}\, (k-1)!\, B_{n,k}\left(\Psi'(0),\Psi''(0),\dots,\Psi^{(n-k+1)}(0)\right)
\end{equation}
where $B_{n,k}\equiv\bell_{n,k}(x_1,x_2,\dotsc,x_{n-k+1})$ are the partial Bell polynomials defined by \cite[p. 134]{Comtet}
\begin{equation*}
\bell_{n,k}(x_1,x_2,\dotsc,x_{n-k+1})=\sum_{\substack{1\le i\le n,\ell_i\in\mathbb{N}\\ \sum_{i=1}^ni\ell_i=n\\ \sum_{i=1}^n\ell_i=k}}\frac{n!}{\prod_{i=1}^{n-k+1}\ell_i!} \prod_{i=1}^{n-k+1}\Bigl(\frac{x_i}{i!}\Bigr)^{\ell_i}.
\end{equation*}
\end{lemma}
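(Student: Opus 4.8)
The plan is to identify both sides of \eqref{lem1} as the coefficient of $q^n$ in the power series $\log\Psi(q)$, computed in two different ways. Since $\Psi(0)=1$, the function $q\mapsto\log\Psi(q)$ is analytic near the origin, so this makes sense and, moreover, Fa\`a di Bruno's formula applies to it as a composition $f\circ\Psi$ with $f(x)=\log x$.

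First I would compute the coefficients directly. Taking logarithms in the second product of \eqref{ono} gives
$$\log\Psi(q)=\sum_{j=1}^{\infty}\bigl(2\log(1+q^{j})+\log(1-q^{j})\bigr),$$
and expanding each logarithm via $\log(1+x)=\sum_{m\ge1}\tfrac{(-1)^{m-1}}{m}x^{m}$ and $\log(1-x)=-\sum_{m\ge1}\tfrac{1}{m}x^{m}$, the summand over $j,m$ becomes $\tfrac{2(-1)^{m-1}-1}{m}\,q^{jm}=-\tfrac{1+2(-1)^{m}}{m}\,q^{jm}$. For $|q|<1$ the double series $\sum_{j,m}\tfrac1m|q|^{jm}$ converges, so we may reorder and group terms according to the value of $jm$; extracting the coefficient of $q^{n}$ forces $jm=n$, i.e.\ $m$ ranges over the divisors of $n$ with $j=n/m$, yielding
$$[q^{n}]\log\Psi(q)=-\sum_{d\mid n}\frac{1+2(-1)^{d}}{d}.$$

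Second, I would apply Fa\`a di Bruno's formula to $\log\Psi(q)=f(\Psi(q))$:
$$\frac{d^{n}}{dq^{n}}\log\Psi(q)=\sum_{k=1}^{n}f^{(k)}(\Psi(q))\,B_{n,k}\bigl(\Psi'(q),\dots,\Psi^{(n-k+1)}(q)\bigr).$$
Since $f^{(k)}(x)=(-1)^{k-1}(k-1)!\,x^{-k}$ and $\Psi(0)=1$, evaluating at $q=0$ gives $f^{(k)}(\Psi(0))=(-1)^{k-1}(k-1)!$, hence
$$n!\,[q^{n}]\log\Psi(q)=\sum_{k=1}^{n}(-1)^{k-1}(k-1)!\,B_{n,k}\bigl(\Psi'(0),\dots,\Psi^{(n-k+1)}(0)\bigr).$$
Equating the two expressions for $[q^{n}]\log\Psi(q)$, dividing by $n!$, and using $(-1)^{k-1}=-(-1)^{k}$ gives exactly \eqref{lem1} (with the left-hand side understood as $\sum_{d\mid n}\tfrac{1+2(-1)^{d}}{d}$).

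Most of this is routine substitution; the only point that needs a word of care is the rearrangement of the double series in the first step, which is legitimate by absolute convergence for $|q|<1$, together with the bookkeeping that turns the constraint $jm=n$ into a sum over divisors of $n$. I expect that to be the only real obstacle, and it is a mild one.
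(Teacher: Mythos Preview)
Your proof is correct and follows essentially the same route as the paper: take logarithms in the product formula \eqref{ono}, expand and regroup to read off the divisor sum as the $q^{n}$-coefficient of $\log\Psi(q)$, then apply Fa\`a di Bruno to $f\circ\Psi$ with $f=\log$ and evaluate at $q=0$. Your version is slightly more careful about justifying the rearrangement by absolute convergence, and you rightly note that the left-hand side of \eqref{lem1} is to be read as $\sum_{d\mid n}\tfrac{1+2(-1)^{d}}{d}$.
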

\begin{proof}
Taking logarithm on both sides of Equation \eqref{ono} we easily see that
\begin{align*}
\log(\Psi(q))&=\sum_{j=1}^{\infty}2\,\log(1+q^{j})+\sum_{j=1}^{\infty}\log(1-q^{j})\\
&=-\sum_{j=1}^{\infty}\sum_{l=1}^{\infty}2\,\frac{(-1)^{l}\,q^{lj}}{l}-\sum_{j'=1}^{\infty}\sum_{l'=1}^{\infty}\frac{q^{l'j'}}{l'}\\
&=-\sum_{n=1}^{\infty}q^{n}\,\sum_{d|n} \frac{1+2\,(-1)^{d}}{d}.
\end{align*}
Let $f(q)=\log{q}$. Using Fa\`{a} di Bruno's formula \cite[p. 137]{Comtet} we have
\begin{equation}
\label{faa}
{d^n \over dq^n} f(\Psi(q)) = \sum_{k=1}^n f^{(k)}(\Psi(q))\cdot B_{n,k}\left(\Psi'(q),\Psi''(q),\dots,\Psi^{(n-k+1)}(q)\right).
\end{equation}
Since $f^{(k)}(q)=\frac{(-1)^{k-1}\,(k-1)!}{q^{k}}$ and $\Psi(0)=1$, letting $q\rightarrow 0$ in the above equation gives us Equation \eqref{lem1}.
\end{proof}
\begin{lemma}
We have, for positive integers $n,k$,
\begin{equation}
\label{lem2}
B_{n,k}\left(\Psi'(0),\Psi''(0),\dots,\Psi^{(n-k+1)}(0)\right)=\frac{n!}{k!}\sum_{r=1}^{k}(-1)^{k-r}\binom{k}{r}t_{r}(n)
\end{equation}
\end{lemma}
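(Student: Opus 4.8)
The plan is to exploit the exponential generating function of the partial Bell polynomials. Recall from \cite[p.~133]{Comtet} that
\begin{equation*}
\sum_{n\ge k}\bell_{n,k}(x_1,x_2,\dotsc)\,\frac{t^n}{n!}=\frac{1}{k!}\Bigl(\sum_{m\ge 1}x_m\,\frac{t^m}{m!}\Bigr)^{k}.
\end{equation*}
First I would specialize $x_m=\Psi^{(m)}(0)$. Since $\Psi$ is analytic near $0$ with $\Psi(0)=1$, Taylor's theorem gives $\sum_{m\ge 1}\Psi^{(m)}(0)\,q^m/m!=\Psi(q)-1$, so that
\begin{equation*}
\sum_{n\ge k}\bell_{n,k}\bigl(\Psi'(0),\dotsc,\Psi^{(n-k+1)}(0)\bigr)\,\frac{q^n}{n!}=\frac{1}{k!}\bigl(\Psi(q)-1\bigr)^{k}.
\end{equation*}

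Next I would expand the right-hand side by the binomial theorem, $\bigl(\Psi(q)-1\bigr)^{k}=\sum_{r=0}^{k}\binom{k}{r}(-1)^{k-r}\Psi(q)^{r}$, and insert the identity $\Psi^{r}(q)=\sum_{n\ge 0}t_{r}(n)\,q^{n}$ recorded above (with the convention $\Psi^{0}(q)=1$, i.e.\ $t_{0}(0)=1$ and $t_{0}(n)=0$ for $n\ge 1$). Comparing the coefficient of $q^{n}$ on the two sides then yields, for every $n\ge 1$,
\begin{equation*}
\frac{\bell_{n,k}\bigl(\Psi'(0),\dotsc,\Psi^{(n-k+1)}(0)\bigr)}{n!}=\frac{1}{k!}\sum_{r=0}^{k}(-1)^{k-r}\binom{k}{r}t_{r}(n)=\frac{1}{k!}\sum_{r=1}^{k}(-1)^{k-r}\binom{k}{r}t_{r}(n),
\end{equation*}
the $r=0$ term dropping out because $t_{0}(n)=0$; this is precisely \eqref{lem2}. (For $1\le n<k$ both sides are zero: $\bell_{n,k}=0$ by definition, while $(\Psi(q)-1)^k=(q+q^3+q^6+\cdots)^k$ contains no power of $q$ below $q^{k}$.)

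The computation is essentially routine, so the only real point to watch is the bookkeeping at $r=0$: one must confirm that the convention $t_{0}(n)=0$ for $n\ge 1$ (equivalently, that $(\Psi(q)-1)^k$ has vanishing constant term for $k\ge 1$) is exactly what allows passage from the binomial sum $\sum_{r=0}^{k}$ to the sum $\sum_{r=1}^{k}$ appearing in the statement. One should also note that, since $|q|<1$, all the series in play converge and represent functions analytic near $q=0$, so the term-by-term comparison of Taylor coefficients is legitimate; alternatively the entire argument can be carried out in the ring of formal power series in $q$.
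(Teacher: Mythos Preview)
Your proof is correct and follows essentially the same route as the paper: specialize the exponential generating function of the partial Bell polynomials at $x_m=\Psi^{(m)}(0)$ to obtain $\frac{1}{k!}(\Psi(q)-1)^k$, expand by the binomial theorem, insert $\Psi^r(q)=\sum_{n\ge 0}t_r(n)q^n$, and compare coefficients of $q^n$. Your additional remarks on the $r=0$ term and the degenerate range $1\le n<k$ are sound bookkeeping that the paper leaves implicit.
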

\begin{proof}
We start with the generating function for the partial Bell polynomials \cite[Equation (3a') on p. 133]{Comtet}, and proceed as follows
\begin{align*}
{\displaystyle \sum _{n=k}^{\infty }B_{n,k}\left(\Psi'(0),\Psi''(0),\dots,\Psi^{(n-k+1)}(0)\right){\frac {q^{n}}{n!}}}
&= {\frac {1}{k!}}\left(\sum _{j=1}^{\infty }\Psi^{(j)}(0){\frac {q^{j}}{j!}}\right)^{k} \\
&=\frac{1}{k!}(\Psi(q)-1)^{k}\\
&=\frac{1}{k!}\sum_{r=0}^{k}(-1)^{k-r}\binom{k}{r}\Psi(q)^{r}\\
&=\frac{1}{k!}\sum_{r=0}^{k}(-1)^{k-r}\binom{k}{r}\sum_{n=0}^{\infty}t_{r}(n)q^{n}
\end{align*}
to conclude Equation \eqref{lem2}.
\end{proof}
\begin{proof}[Proof of Theorem \ref{main}]
We combine Equation \eqref{lem1} and Equation \eqref{lem2} to obtain
\begin{align*}
\sum_{d|n} \frac{1+2\,(-1)^{d}}{d}&=\sum_{k=1}^{n}\frac{1}{k}\sum_{r=1}^{k}(-1)^{r}\, \binom{k}{r}\,t_{r}(n)\\
&=\sum_{r=1}^{n}(-1)^{r}\, t_{r}(n)\sum_{k=r}^{n}\frac{1}{k}\binom{k}{r}\\
&=\sum_{r=1}^{n}\frac{(-1)^{r}}{r}\,\binom{n}{r}\,t_{r}(n),
\end{align*}
where we used the identity $\sum_{k=r}^{n}\frac{1}{k}\binom{k}{r}=\frac{1}{r}\binom{n}{r}$ which can be proved using the Pascal's formula
$$
\binom{k}{r-1}=\binom{k+1}{r}-\binom{k}{r}.
$$
This concludes proof of our main result Equation \eqref{maineq}.
\end{proof}

\end{document}